\documentclass[11pt]{amsart}

\usepackage[T1]{fontenc}
\usepackage[margin=1in]{geometry}
\usepackage{amsthm}
\usepackage{amssymb}

\theoremstyle{plain}
\newtheorem{theorem}[subsection]{Theorem}
\newtheorem{lemma}[subsection]{Lemma}
\newtheorem{proposition}[subsection]{Proposition}
\newtheorem{corollary}[subsection]{Corollary}

\theoremstyle{definition}
\newtheorem{definition}[subsection]{Definition}

\newtheorem{remark}[subsection]{Remark}

\title{Stability of Partitions Induced by Nearest-Center Assignment Under Perturbations}

\author{MD Nahidul Hasan Sabit}
\address{Princeton University, USA}
\email{nahid.sabit@princeton.edu}

\author{Faija Anjum}
\address{United World College, Mostar, Bosnia \& Herzegovina}
\email{faija.anjum@uwcmostar.ba }

\date{}

\begin{document}
\maketitle

\begin{abstract}
We study clustering through the partitions it induces on a finite labeled set $[n]=\{1,\dots,n\}$, and analyze how these partitions change under perturbations of a point configuration $X=(x_1,\dots,x_n)\in(\mathbb{R}^d)^n$. We equip the space of partitions $\Pi_n$ with a normalized pairwise disagreement metric $d(\cdot,\cdot)$, and define the stability radius $r(X,A)=\sup\{\varepsilon\ge0: A(X')=A(X)\ \text{whenever}\ \|X-X'\|\le\varepsilon\}$, where $\|X-X'\|=\max_i\|x_i-x'_i\|$. Our main results concern nearest-center assignment with fixed centers $\{c_1,\dots,c_k\}\subset\mathbb{R}^d$. For each point, we define the margin $\gamma_i=\min_{j\ne\ell_i}(\|x_i-c_j\|-\|x_i-c_{\ell_i}\|)$ and $\gamma_{\min}=\min_i\gamma_i$, where $\ell_i$ denotes the assigned center. We show that if $\varepsilon<\gamma_{\min}/2$, then no assignments change under perturbation and hence $A(X')=A(X)$. Conversely, any point that changes its assigned center must satisfy $\gamma_i\le2\varepsilon$, showing that instability is localized near decision boundaries. We construct configurations in which arbitrarily small perturbations $\|X-X'\|\le\varepsilon$ alter the induced partition, demonstrating that the margin condition is sufficient but not necessary for stability. We further extend the framework to a discrete-time setting, showing that if $\sum_t \delta_t<r(X(0),A)$, then $A(X(t))=A(X(0))$, and we give a probabilistic bound on $\mathbb{E}[d(A(X),A(X'))]$ in terms of tail probabilities relative to $\gamma_i$. This framework identifies the margin as the key quantity governing both worst-case and average stability, and provides explicit conditions under which clustering-induced partitions remain invariant in a fixed-center Euclidean model.
\end{abstract}

\section{Introduction}

Clustering methods assign each point in a finite dataset to a group. The output of such a method can be viewed as a partition of the index set $[n] = {1, \dots, n}$ of the data points. In many applications, these groupings are used to support decisions. For example, hospitals group patients by risk, and financial institutions group customers by behavior. These assignments influence treatment, pricing, and allocation of resources. In practice, the input data often contains noise or measurement error. Small changes in the data may be modeled as perturbations of a point configuration $X = (x_1, \dots, x_n) \in (\mathbb{R}^d)^n$ to a nearby configuration $X'$. Even when the perturbation size $|X - X'|$ is small, the resulting assignments may change, and hence may alter the induced partition of $[n]$. This can affect the decisions that depend on these groupings. This motivates the following question: how stable is the partition produced by a clustering rule under perturbations of the data? In this paper, we study this question by focusing on the partitions induced by clustering rules. We treat the output of a clustering rule as a partition of a finite labeled set and study how this partition changes when the input configuration is perturbed. To measure changes, we use a normalized pairwise disagreement between partitions and define a notion of stability radius, which captures the largest perturbation size under which the induced partition remains unchanged. We note that a change in assigned centers may or may not change the induced partition, since partitions record only which indices are grouped together and do not encode the labels of the centers. In particular, simultaneous reassignment of entire groups may preserve the partition structure. Throughout the paper, we therefore distinguish between changes in assignments and changes in the induced partition.

\section{Related Background and Prior Work}

This work is focused on the clustering methods, stability under perturbation, and the combinatorics of partitions.

Clustering methods take a finite set of points and assign them to groups. The output of such a method can be viewed as a partition of the index set $[n]$ of the data points. Many standard approaches, including $k$-means and hierarchical clustering, can be interpreted in this way. These methods are widely used in applications where grouping influences downstream decisions \cite{1,2}.

There is also a line of work on clustering stability. These works study how the output of a clustering method changes when the input data is modified. The goal is often to evaluate robustness or to guide model selection. Much of this literature is algorithmic or statistical in nature and depends on properties of the specific method being used \cite{3,4}.

From a combinatorial perspective, a clustering can be viewed as a partition of a finite set. Partitions are fundamental objects in discrete mathematics and appear in areas such as graph theory and set systems \cite{5}. Measures of similarity or dissimilarity between partitions, including pairwise agreement and disagreement, are commonly used in clustering evaluation \cite{6}.

The approach in this paper differs from these directions in two ways. First, we fix a clustering rule and study the induced partition directly, rather than analyzing an optimization procedure or a randomized algorithm. This removes sources of variability unrelated to the geometry of the data. Second, we express stability entirely in terms of partitions and a metric on $\Pi_n$, allowing the problem to be studied through pairwise relations on the index set.

Within this framework, we identify the margin as the key quantity governing stability. We give explicit deterministic conditions under which the induced partition remains unchanged under perturbations, characterize which points can change their assigned centers, and construct configurations where arbitrarily small perturbations alter the partition. We also extend the analysis to discrete-time evolution and give bounds on expected partition change under random perturbations.

The goal is not to model all aspects of clustering stability, but to isolate a setting in which the role of margin can be described precisely and without additional algorithmic or statistical assumptions.

\section{Preliminaries and Definitions}

We fix notation and basic definitions used throughout the paper. All sets are finite. We assume $n \ge 2$ so that $\binom{n}{2} > 0$.
We work with a finite labeled point configuration
\[
X = (x_1, x_2, \ldots, x_n), \qquad x_i \in \mathbb{R}^d.
\]
We emphasize that $X$ is an ordered $n$-tuple rather than a set. The labeling by indices is part of the data and is essential throughout, since partitions are defined on the index set
$[n] = \{1,2,\ldots,n\},$
and perturbations compare each $x_i$ with a corresponding perturbed point $x_i'$ having the same index.

\begin{definition}[Partition]
A partition of $[n]$ is a collection
$P = \{C_1, C_2, \ldots, C_k\}$
of nonempty subsets of $[n]$ such that
\[
C_i \cap C_j = \emptyset \text{ for } i \ne j, 
\qquad \bigcup_{i=1}^k C_i = [n].
\]
Each set $C_i$ is called a block of the partition. We write $\Pi_n$ for the set of all partitions of $[n]$.
\end{definition}

\begin{definition}[Clustering rule]
A clustering rule is a map
$A : (\mathbb{R}^d)^n \to \Pi_n$
that assigns to each finite labeled point configuration $X$ a partition $A(X)$ of $[n]$. Two indices $i,j \in [n]$ lie in the same block of $A(X)$ if and only if the corresponding points $x_i$ and $x_j$ are assigned to the same cluster under the rule.
\end{definition}

We stress that the clustering rule acts on labeled configurations. Thus, two configurations that differ by a permutation of indices are treated as distinct inputs.

\begin{definition}[Partition distance]
Let $P,Q \in \Pi_n$. For $1 \le i < j \le n$, define
\[
\mathbf{1}_P(i,j) =
\begin{cases}
1 & \text{if } i \text{ and } j \text{ lie in the same block of } P, \\
0 & \text{otherwise.}
\end{cases}
\]
The partition distance is defined by
$d(P,Q) = \frac{1}{\binom{n}{2}} 
\left| \left\{ (i,j) : 1 \le i < j \le n,\ \mathbf{1}_P(i,j) \ne \mathbf{1}_Q(i,j) \right\} \right|.$

Thus $d(P,Q)$ is the fraction of unordered pairs $\{i,j\}$ on which the two partitions disagree about whether the indices belong to the same block. The quantity $d(P,Q)$ takes values in $[0,1]$, and $d(P,Q)=0$ if and only if $P=Q$.
\end{definition}

\begin{definition}[Perturbation]
Let
\[
X = (x_1,\ldots,x_n), \qquad X' = (x_1',\ldots,x_n')
\]
be two finite labeled point configurations indexed by the same set $[n]$. The perturbation size is defined by
\[
\|X - X'\| = \max_{1 \le i \le n} \|x_i - x_i'\|.
\]
This measures the largest displacement of any point, with indices matched across the two configurations.
\end{definition}

\begin{definition}[Stability radius]
Let $A$ be a clustering rule and let $X$ be a finite labeled point configuration. The stability radius of $X$ with respect to $A$ is
\[
r(X,A) = \sup \left\{ \varepsilon \ge 0 : A(X') = A(X)\ \text{for all } X' \text{ with } \|X - X'\| \le \varepsilon \right\}.
\]
Equivalently, $r(X,A)$ is the largest perturbation size for which the induced partition on $[n]$ remains unchanged.
\end{definition}

The definitions above describe the general framework used in the paper. We begin with a labeled configuration, apply a clustering rule to obtain a partition, and measure how this partition changes under perturbations. In later sections, we specialize to nearest-center assignment with fixed centers in Euclidean space, where the structure of the rule allows explicit stability bounds to be derived.

\section{Deterministic Stability: Two Clusters}

In this section, we study a specific clustering rule: nearest-center assignment with two fixed centers in $\mathbb{R}^d$. The centers are fixed and are not affected by perturbations of the data.

Let
$X = (x_1, x_2, \ldots, x_n) \in (\mathbb{R}^d)^n,$
and let $c_1, c_2 \in \mathbb{R}^d$ be two distinct centers. We define a clustering rule $A$ by assigning each point to its nearest center.
\[
x_i \in C_1 \ \text{if } \|x_i - c_1\| \le \|x_i - c_2\|, \qquad
x_i \in C_2 \ \text{otherwise}.
\]
When a point is equidistant from $c_1$ and $c_2$, we assign it to $C_1$. This gives a deterministic tie-breaking rule, ensuring that the induced partition $A(X)$ is well-defined.

The decision boundary is the set of points equidistant from the two centers. We introduce a quantity that measures how stable each point’s assignment is.

\begin{definition}[Margin]
For each $x_i \in X$, define
$\gamma_i = \left| \|x_i - c_1\| - \|x_i - c_2\| \right|.$
Define the minimum margin
$\gamma_{\min} = \min_{1 \le i \le n} \gamma_i.$ The quantity $\gamma_i$ measures the difference between the distances of $x_i$ to the two centers. Points with small $\gamma_i$ lie close to the decision boundary and are sensitive to perturbations, while points with large $\gamma_i$ are more stable.
\end{definition}

\begin{theorem}
Let
\[
X' = (x_1', x_2', \ldots, x_n')
\]
be a perturbation of $X$ such that
\[
\|X - X'\| \le \varepsilon.
\]
If
$\varepsilon < \frac{\gamma_{\min}}{2},$
no point changes its assigned center under the perturbation. Consequently,
$A(X') = A(X).$
\end{theorem}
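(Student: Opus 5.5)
The plan is to work with the signed distance gap $f(y) = \|y - c_1\| - \|y - c_2\|$ and show it cannot change sign under the perturbation. By definition $\gamma_i = |f(x_i)|$, and the rule assigns $x_i$ to $C_1$ exactly when $f(x_i) \le 0$. By the triangle inequality, the map $y \mapsto \|y - c\|$ is $1$-Lipschitz for each center $c$. Hence $f$ is $2$-Lipschitz:
\[
|f(y) - f(y')| \le \bigl|\|y-c_1\| - \|y'-c_1\|\bigr| + \bigl|\|y-c_2\| - \|y'-c_2\|\bigr| \le 2\|y-y'\|.
\]

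Next fix an index $i$ and note $\|x_i - x_i'\| \le \|X - X'\| \le \varepsilon$, so $|f(x_i') - f(x_i)| \le 2\varepsilon < \gamma_{\min} \le \gamma_i$. We then split on the sign of $f(x_i)$.

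\begin{itemize}
\item If $f(x_i) < 0$, then $f(x_i) = -\gamma_i$, so $f(x_i') < -\gamma_i + \gamma_i = 0$. Thus $x_i'$ is also assigned to $C_1$.
\item If $f(x_i) > 0$, the symmetric argument gives $f(x_i') > 0$, so $x_i'$ stays in $C_2$.
\end{itemize}

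The only delicate point is the tie case $f(x_i) = 0$. It forces $\gamma_i = 0$, hence $\gamma_{\min} = 0$, and then the hypothesis $\varepsilon < \gamma_{\min}/2 = 0$ is unsatisfiable because $\varepsilon \ge 0$. So the statement is vacuous there, and the strict inequality in the hypothesis is exactly what handles it.

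With every assignment preserved, the blocks $\{i : x_i \in C_1\}$ and $\{i : x_i \in C_2\}$ are identical for $X$ and $X'$. Since the partition is determined by the assignment map on indices, discarding any empty block, $A(X') = A(X)$. There is no real obstacle beyond stating the Lipschitz bound cleanly and noting the boundary and tie-breaking convention.
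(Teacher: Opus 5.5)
Your proof is correct and is essentially the paper's argument. The paper uses the triangle inequality to bound each center distance's change by $\varepsilon$, so the gap $\|x_i'-c_2\|-\|x_i'-c_1\|$ (or its mirror) stays at least $\gamma_i-2\varepsilon>0$; this is your $2$-Lipschitz bound on $f$ written out case by case, and your explicit treatment of the tie case is a harmless addition that the paper covers implicitly through the strict inequality $\gamma_i-2\varepsilon>0$.
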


\begin{proof}
Fix $i \in \{1, \ldots, n\}$ and let $x_i'$ be the perturbed point. By definition of the perturbation size,
$\|x_i - x_i'\| \le \varepsilon.$
We compare the distances from $x_i'$ to the centers. By the triangle inequality, for $j = 1,2$,
\[
\|x_i' - c_j\| \le \|x_i - c_j\| + \varepsilon, \qquad
\|x_i' - c_j\| \ge \|x_i - c_j\| - \varepsilon.
\]

Assume that $x_i$ is assigned to $c_1$, so that
\[
\|x_i - c_1\| \le \|x_i - c_2\|.
\]
Then
\[
\|x_i - c_2\| - \|x_i - c_1\| = \gamma_i \ge \gamma_{\min}.
\]

For the perturbed point, we have
\[
\|x_i' - c_2\| - \|x_i' - c_1\|
\ge (\|x_i - c_2\| - \varepsilon) - (\|x_i - c_1\| + \varepsilon)
= \gamma_i - 2\varepsilon.
\]

Since $\varepsilon < \gamma_{\min}/2 \le \gamma_i/2$, we obtain
\[
\gamma_i - 2\varepsilon > 0.
\]
Thus,
\[
\|x_i' - c_2\| > \|x_i' - c_1\|,
\]
so $x_i'$ remains assigned to $c_1$.

The same argument applies if $x_i$ is assigned to $c_2$. Therefore, no point changes its assigned center under the perturbation. Since cluster membership is determined by assigned centers and no assignments change, all pairwise relations of belonging to the same cluster remain unchanged. Thus, the induced partition does not change, and $A(X') = A(X)$.
\end{proof}

This result shows that the minimum margin provides a sufficient condition for stability in the two-center setting. If all points are sufficiently far from the decision boundary, small perturbations do not change any assignments, and the induced partition remains unchanged.

\section{Structural Bounds and Generalizations}

In this section, we extend the results from the two-center case to the setting of $k \ge 2$ fixed centers. We continue to work with nearest-center assignment in $\mathbb{R}^d$, where the centers are fixed under perturbations and a deterministic tie-breaking rule is used.

\smallskip

Let
$X = (x_1, x_2, \ldots, x_n) \in (\mathbb{R}^d)^n.$
Let $c_1, c_2, \ldots, c_k \in \mathbb{R}^d$ be distinct centers. Each point is assigned to its nearest center, producing a partition $A(X)$ of $[n]$. For each $i$, let $\ell_i \in \{1, \ldots, k\}$ denote the index of the center to which $x_i$ is assigned.

We introduce a margin that measures how close a point is to changing its assigned center.

\begin{definition}[Margin]
For each $x_i \in X$, define
$\gamma_i = \min_{j \ne \ell_i} \left( \|x_i - c_j\| - \|x_i - c_{\ell_i}\| \right),$
and define
$\gamma_{\min} = \min_{1 \le i \le n} \gamma_i.$
The quantity $\gamma_i$ measures how close the point $x_i$ is to switching its assigned center. Points with small margin lie near decision boundaries, while points with large margin are more stable.
\end{definition}

\begin{lemma}[Boundary-margin criterion]
Let
\[
X' = (x_1', x_2', \ldots, x_n')
\]
satisfy
\[
\|X - X'\| \le \varepsilon.
\]
If
$\varepsilon < \frac{\gamma_i}{2},$
$x_i$ does not change its assigned center under the perturbation.
\end{lemma}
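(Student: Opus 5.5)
The plan is to repeat the triangle-inequality argument from the two-center theorem, now applied to every competing center $c_j$ with $j \ne \ell_i$. Fix the index $i$ and write $\ell = \ell_i$. Since $\|x_i - x_i'\| \le \|X - X'\| \le \varepsilon$, the triangle inequality gives, for every center $c_j$,
\[
\|x_i - c_j\| - \varepsilon \le \|x_i' - c_j\| \le \|x_i - c_j\| + \varepsilon.
\]
Using the upper bound for $c_\ell$ and the lower bound for each $c_j$ with $j \ne \ell$, I will obtain
\[
\|x_i' - c_j\| - \|x_i' - c_\ell\| \ge \bigl(\|x_i - c_j\| - \|x_i - c_\ell\|\bigr) - 2\varepsilon \ge \gamma_i - 2\varepsilon > 0,
\]
where the middle step uses the definition of $\gamma_i$ as a minimum over $j \ne \ell$.

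This shows that $c_\ell$ is strictly closer to $x_i'$ than every other center. Hence $c_\ell$ is the unique nearest center of $x_i'$, and the assignment of $x_i'$ is $\ell$ whatever the deterministic tie-breaking rule is.

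The only point needing care is the tie-breaking rule, and this is where the strict inequality matters. The hypothesis $\varepsilon < \gamma_i/2$ forces $\gamma_i > 0$ (note $\varepsilon \ge 0$). So $x_i$ itself has a unique nearest center, and $\ell_i$ does not depend on tie-breaking either. Since both $x_i$ and $x_i'$ have $c_\ell$ as their unique nearest center, the assigned center of index $i$ is unchanged, which proves the lemma. No step is a real obstacle; the argument is a direct generalization of the two-center case.
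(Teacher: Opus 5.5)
Your proof is correct and follows the paper's argument essentially verbatim: the triangle inequality bounds $\|x_i'-c_j\|-\|x_i'-c_{\ell_i}\|$ below by $\gamma_i-2\varepsilon>0$ for every $j\ne\ell_i$, so $c_{\ell_i}$ remains the strictly nearest center of $x_i'$. Your added remark that $\gamma_i>0$ makes $\ell_i$ itself independent of the tie-breaking rule is a small piece of extra care that the paper leaves implicit.
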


\begin{proof}
Let $x_i'$ be the perturbed point. For any $j \ne \ell_i$, by the triangle inequality,
\[
\|x_i' - c_j\| \ge \|x_i - c_j\| - \varepsilon, \qquad
\|x_i' - c_{\ell_i}\| \le \|x_i - c_{\ell_i}\| + \varepsilon.
\]
Thus
\[
\|x_i' - c_j\| - \|x_i' - c_{\ell_i}\|
\ge (\|x_i - c_j\| - \|x_i - c_{\ell_i}\|) - 2\varepsilon
\ge \gamma_i - 2\varepsilon.
\]
If $\varepsilon < \gamma_i/2$, then $\gamma_i - 2\varepsilon > 0$, so for all $j \ne \ell_i$,
\[
\|x_i' - c_j\| > \|x_i' - c_{\ell_i}\|.
\]
Thus $x_i'$ remains assigned to $c_{\ell_i}$.
\end{proof}

We now obtain a global stability condition.

\begin{proposition}[Global no-switch condition]
Let $X'$ satisfy $\|X - X'\| \le \varepsilon$. If
$\varepsilon < \frac{\gamma_{\min}}{2},$
no point changes its assigned center, and consequently
$A(X') = A(X).$
\end{proposition}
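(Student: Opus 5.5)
The plan is to reduce the global statement to the pointwise Boundary-margin criterion (Lemma above), applied index by index, and then pass from the assignment level to the partition level.

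First, fix $i \in [n]$. Since $\gamma_{\min} = \min_{1 \le m \le n} \gamma_m \le \gamma_i$, the hypothesis $\varepsilon < \gamma_{\min}/2$ gives $\varepsilon < \gamma_i/2$. The hypothesis $\|X - X'\| \le \varepsilon$ is exactly the hypothesis of the Lemma. Hence $x_i'$ is assigned to $c_{\ell_i}$, so $\ell_i' = \ell_i$, where $\ell_i'$ denotes the center assigned to $x_i'$. Because $i$ was arbitrary, no point changes its assigned center.

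One subtlety deserves a check here, namely tie-breaking. The Lemma's proof gives the strict inequality $\|x_i' - c_j\| > \|x_i' - c_{\ell_i}\|$ for every $j \ne \ell_i$. So $c_{\ell_i}$ is the unique nearest center to $x_i'$, and whatever deterministic tie-breaking rule is in force never comes into play for $X'$. I also note that if some $\gamma_i = 0$ (a tie in $X$), then $\gamma_{\min} = 0$ and the hypothesis is vacuous, so nothing needs to be said in that case.

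Next comes the passage to partitions. Two indices $i, j$ lie in the same block of $A(X)$ exactly when $\ell_i = \ell_j$, and they lie in the same block of $A(X')$ exactly when $\ell_i' = \ell_j'$. Since $\ell_m' = \ell_m$ for all $m$, we get $\mathbf{1}_{A(X)}(i,j) = \mathbf{1}_{A(X')}(i,j)$ for every pair. Therefore $d(A(X), A(X')) = 0$, and since $d$ separates partitions, $A(X') = A(X)$.

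There is no real obstacle in this argument. The only point requiring care is the uniqueness-of-nearest-center observation, which ensures that the tie-breaking rule cannot silently reassign a point.
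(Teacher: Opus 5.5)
Your proposal is correct and follows the same route as the paper: use $\gamma_{\min} \le \gamma_i$ to get $\varepsilon < \gamma_i/2$ for every $i$, invoke the boundary-margin criterion pointwise, and conclude that unchanged assignments preserve every pairwise same-block relation. Your two extra observations are correct refinements that the paper leaves implicit. First, the strict inequality makes $c_{\ell_i}$ the unique nearest center of $x_i'$, so tie-breaking never applies. Second, the case $\gamma_{\min}=0$ is vacuous.
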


\begin{proof}
For each $i$, we have $\gamma_i \ge \gamma_{\min}$, so
\[
\varepsilon < \frac{\gamma_{\min}}{2} \le \frac{\gamma_i}{2}.
\]
By the boundary-margin criterion, no point changes its assigned center. Since cluster membership is determined by assigned centers and no assignments change, all pairwise relations remain unchanged. Therefore, $A(X') = A(X)$.
\end{proof}

We now describe which points may change their assigned centers under perturbation.

\begin{proposition}
Let $X'$ satisfy $\|X - X'\| \le \varepsilon$. If $x_i$ changes its assigned center under the perturbation, then
$\gamma_i \le 2\varepsilon.$
\end{proposition}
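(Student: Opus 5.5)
The plan is to argue by contraposition, directly from the Boundary-margin criterion (the lemma above). Suppose $\gamma_i > 2\varepsilon$, that is, $\varepsilon < \gamma_i/2$. The lemma then says that $x_i$ does not change its assigned center under any perturbation $X'$ with $\|X - X'\| \le \varepsilon$. Hence, if $x_i$ does change its assigned center, the hypothesis $\varepsilon < \gamma_i/2$ must fail, which gives $\varepsilon \ge \gamma_i/2$, i.e.\ $\gamma_i \le 2\varepsilon$.

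To make the argument self-contained, I will also record the direct inequality behind it. From the triangle inequality, for every $j \ne \ell_i$,
\[
\|x_i' - c_j\| - \|x_i' - c_{\ell_i}\| \ge \gamma_i - 2\varepsilon .
\]
Suppose $x_i'$ is assigned to some $c_m$ with $m \ne \ell_i$. Then $\|x_i' - c_m\| \le \|x_i' - c_{\ell_i}\|$, so the left-hand side is $\le 0$ for $j=m$. It follows that $\gamma_i - 2\varepsilon \le 0$.

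The only step needing care is tie-breaking. A switch to $c_m$ could in principle occur with equality $\|x_i' - c_m\| = \|x_i' - c_{\ell_i}\|$ if the deterministic tie-breaking rule favors $c_m$. This is exactly why the conclusion is the non-strict $\gamma_i \le 2\varepsilon$ rather than a strict inequality. The lemma's strict conclusion $\|x_i' - c_j\| > \|x_i' - c_{\ell_i}\|$ for all $j \ne \ell_i$ already rules out such ties whenever $\varepsilon < \gamma_i/2$, so the contrapositive covers every tie-breaking convention. I expect no further obstacle.
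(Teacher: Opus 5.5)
Your proposal is correct and follows essentially the same route as the paper: a contrapositive application of the boundary-margin criterion (if $\gamma_i > 2\varepsilon$ then $x_i$ keeps its center). Your added direct inequality with $j=m$ and your remark on ties are accurate supplements, not needed for the argument.
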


\begin{proof}
We argue by contrapositive. If $\gamma_i > 2\varepsilon$, then $\varepsilon < \gamma_i/2$, and by the boundary-margin criterion, $x_i$ does not change its assigned center. Therefore, any point that changes its assigned center must satisfy $\gamma_i \le 2\varepsilon$.
\end{proof}

This shows that assignment changes are localized near decision boundaries: only points with small margin can change their assigned center.

We now relate the margin to the stability radius defined earlier.

\begin{corollary}
The stability radius satisfies
\[
r(X, A) \ge \frac{\gamma_{\min}}{2}.
\]
\end{corollary}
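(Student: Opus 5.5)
The plan is to read the corollary off directly from the Global no-switch condition, using only the definition of $r(X,A)$ as a supremum. If $\gamma_{\min}=0$ there is nothing to prove: $r(X,A)\ge 0$ holds trivially, since $\varepsilon=0$ always lies in the defining set. For $\varepsilon=0$, any $X'$ with $\|X-X'\|\le 0$ equals $X$, and so $A(X')=A(X)$.

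So assume $\gamma_{\min}>0$ and write
\[
S=\{\varepsilon\ge 0 : A(X')=A(X)\ \text{for all } X' \text{ with } \|X-X'\|\le\varepsilon\},
\]
so that $r(X,A)=\sup S$. The key step is to show that every $\varepsilon$ with $0\le\varepsilon<\gamma_{\min}/2$ belongs to $S$. This is exactly the content of the Global no-switch condition: for such $\varepsilon$ and any $X'$ with $\|X-X'\|\le\varepsilon$, no point changes its assigned center, and hence $A(X')=A(X)$.

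It follows that $[0,\gamma_{\min}/2)\subseteq S$, and so $\sup S\ge\sup[0,\gamma_{\min}/2)=\gamma_{\min}/2$.

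The only point requiring care is that the hypothesis is the strict inequality $\varepsilon<\gamma_{\min}/2$. We therefore cannot claim that $\gamma_{\min}/2$ itself belongs to $S$, and the bound must come from taking the supremum over the half-open interval rather than plugging in the endpoint. This is also why the statement is an inequality $r(X,A)\ge\gamma_{\min}/2$ rather than a membership claim. Whether the endpoint actually belongs to $S$ depends on tie-breaking and is not needed here.
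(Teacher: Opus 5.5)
Your proposal is correct and follows the paper's proof: both apply the Global no-switch condition to every $\varepsilon<\gamma_{\min}/2$ and then use the definition of $r(X,A)$ as a supremum. Your version is somewhat more careful than the paper's, since it handles the degenerate case $\gamma_{\min}=0$ explicitly and notes that only the half-open interval $[0,\gamma_{\min}/2)$ is shown to lie in the defining set, with the bound coming from taking the supremum.
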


\begin{proof}
If $\varepsilon < \gamma_{\min}/2$, then by the global no-switch condition,
\[
A(X') = A(X)
\]
for all $X'$ with $\|X - X'\| \le \varepsilon$. By the definition of the stability radius, this implies
\[
r(X, A) \ge \frac{\gamma_{\min}}{2}.
\]
\end{proof}

These results show that the margin controls stability in nearest-center assignment with fixed centers. Points that are far from decision boundaries are stable under small perturbations, while points with small margin are the primary source of assignment changes. The bounds identify when stability is guaranteed and which points may change their assigned centers, but they do not characterize all possible cases of instability.

\section{Instability Constructions and Counterexamples}

The previous sections give sufficient conditions under which the induced partition is stable. We now show that these conditions cannot be removed in general. In particular, we construct configurations where arbitrarily small perturbations cause points to change their assigned centers, and in these constructions this leads to changes in the induced partition.

We continue to work with nearest-center assignment with fixed centers in $\mathbb{R}^d$, together with a deterministic tie-breaking rule.

We consider a simple setting in $\mathbb{R}^2$. Let
\[
c_1 = (-1,0), \qquad c_2 = (1,0).
\]
The decision boundary is the vertical line $x = 0$.

\subsection{Single-point instability with anchored clusters}

\begin{proposition}
For any $\varepsilon > 0$, there exists a configuration $X$ with $n \ge 3$ and a perturbation $X'$ with $\|X - X'\| \le \varepsilon$ such that at least one index changes its assigned center, and the induced partition differs.
\end{proposition}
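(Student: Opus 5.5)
We construct the configuration explicitly in the setting just fixed, with $c_1=(-1,0)$, $c_2=(1,0)$, decision boundary $\{x=0\}$, and ties assigned to $C_1$. Given $\varepsilon>0$, we take $n=3$ and set
\[
x_1=(-1,0), \qquad x_2=(1,0), \qquad x_3=(0,0),
\]
so $x_1$ anchors $C_1$, $x_2$ anchors $C_2$, and $x_3$ sits exactly on the boundary. By the tie-breaking rule, $x_3$ is assigned to $c_1$, and therefore
\[
A(X)=\{\{1,3\},\{2\}\}.
\]
Here $\gamma_3=0$, so the margin condition of the global no-switch condition gives no protection. This is consistent with the earlier proposition that only points with $\gamma_i\le 2\varepsilon$ can switch.

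For the perturbation, we leave the anchors fixed, $x_1'=x_1$ and $x_2'=x_2$, and set $x_3'=(\varepsilon/2,0)$. Then $\|X-X'\|=\varepsilon/2\le\varepsilon$. A direct computation gives
\[
\|x_3'-c_2\|=1-\varepsilon/2 \qquad\text{and}\qquad \|x_3'-c_1\|=1+\varepsilon/2
\]
whenever $\varepsilon<2$. In that range $x_3'$ is strictly closer to $c_2$ and is reassigned, so
\[
A(X')=\{\{1\},\{2,3\}\}.
\]
If $\varepsilon\ge 2$, the same perturbation with $\varepsilon$ replaced by $\min(\varepsilon,1)$ works. More simply, $x_3'=(\delta,0)$ works for any $0<\delta\le\varepsilon$ with $\delta<2$, because $|\delta-1|<1+\delta$ holds for all $\delta>0$ in that range.

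To confirm that the induced partition itself changes, and not merely the center labels, we compare pairwise indicators. The pair $\{1,3\}$ is together in $A(X)$ but separated in $A(X')$, and the pair $\{2,3\}$ is separated in $A(X)$ but together in $A(X')$. Hence $A(X)\ne A(X')$, and in fact $d(A(X),A(X'))=2/3$. The anchors are the reason this works: because $x_1$ and $x_2$ stay in their respective clusters, the relabeling ambiguity the introduction warns about, where a whole group is reassigned and the partition is unchanged, cannot occur. For $n>3$, we add further points near $c_1$ or $c_2$ with large margin and leave them unperturbed; they do not switch by the boundary-margin criterion, and the pairs involving index $3$ still disagree.

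The only delicate step is the choice of the boundary point, which relies on the tie-breaking convention. If one prefers to avoid exact ties, we instead take $x_3=(-\varepsilon/4,0)$, which lies strictly in $C_1$ with margin $\gamma_3=\varepsilon/2$, and perturb it to $(\varepsilon/4,0)$, which lies strictly in $C_2$; this is a displacement of exactly $\varepsilon/2$. Either version proves the proposition, and the second shows the instability does not depend on ties.
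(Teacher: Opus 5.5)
Your proof is correct and takes essentially the paper's approach: two fixed anchor points pin down the two clusters, a single point is moved across the line $x=0$, and comparing pairwise indicators shows the partition changes. The differences are cosmetic. The paper uses anchors $(\pm 2,0)$ and moves $(\delta,0)$ to $(-\delta,0)$ with $2\delta<\varepsilon$, while you place the anchors at the centers and start on or just left of the boundary; also, your $\varepsilon<2$ caveat is unnecessary, since any point with positive first coordinate is strictly closer to $c_2$.
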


\begin{proof}
Let $\varepsilon > 0$ and choose $\delta > 0$ such that $2\delta < \varepsilon$.

Define three points
\[
x_1 = (-2,0), \qquad x_2 = (2,0), \qquad x_3 = (\delta,0),
\]
and define the configuration $X = (x_1,x_2,x_3) \in (\mathbb{R}^2)^3$. Then $x_1$ is assigned to $c_1$, while $x_2$ and $x_3$ are assigned to $c_2$.

Label the indices as $1,2,3$ corresponding to $x_1,x_2,x_3$. The induced partition is
$P = \{\{1\}, \{2,3\}\}.$
Now, we define a perturbation by moving only $x_3$.
$x_3' = (-\delta,0),$
and keep $x_1, x_2$ unchanged. Then
\[
\|X - X'\| = \max_i \|x_i - x_i'\| = 2\delta < \varepsilon.
\]

After perturbation, $x_3'$ is assigned to $c_1$, while $x_1$ remains assigned to $c_1$ and $x_2$ to $c_2$. Thus the induced partition becomes
$P' = \{\{1,3\}, \{2\}\}.$

At least one index changes its assigned center, and the block structure differs, so $P \ne P'$. This shows that arbitrarily small perturbations can change the induced partition.
\end{proof}

\subsection{Many-point instability with anchored clusters}

\begin{proposition}
For any $\varepsilon > 0$ and any integer $m \ge 1$, there exists a configuration $X$ and a perturbation $X'$ with $\|X - X'\| \le \varepsilon$ such that at least $m$ indices change their assigned centers, and the induced partition differs.
\end{proposition}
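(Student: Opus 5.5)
We extend the single-point construction from the preceding proposition by keeping the same centers $c_1=(-1,0)$ and $c_2=(1,0)$, whose decision boundary is the line $x=0$, and placing $m$ points just to the right of that line. Two fixed anchor points, far from the boundary, ensure that the induced partition detects the reassignment. Fix $\varepsilon>0$ and $m\ge 1$, and choose $\delta>0$ with $2\delta<\varepsilon$. Let $n=m+2$ and define
\[
x_1=(-2,0),\qquad x_2=(2,0),\qquad x_{2+s}=(\delta,\, s) \quad (s=1,\dots,m).
\]
Distinct vertical offsets keep the points distinct, which is cosmetic; repeated copies of $(\delta,0)$ would also work, since $X$ is a labeled tuple.

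The first step is to verify the original assignments. A point $(a,b)$ satisfies $\|(a,b)-c_1\|^2-\|(a,b)-c_2\|^2=4a$, so its assignment depends only on the sign of its first coordinate, with ties ($a=0$) going to $c_1$. Hence $x_1$ is assigned to $c_1$, while $x_2$ and every $x_{2+s}$ (first coordinate $\delta>0$) are assigned to $c_2$. The induced partition is
\[
P=\{\{1\},\{2,3,\dots,m+2\}\}.
\]

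Next, define $X'$ by keeping $x_1,x_2$ fixed and setting $x_{2+s}'=(-\delta,s)$. Each moved point is displaced by exactly $2\delta$, so $\|X-X'\|=2\delta<\varepsilon$. By the same sign computation, every $x'_{2+s}$ is now assigned to $c_1$, and the anchors keep their assignments. Thus exactly $m$ indices change their assigned centers, and
\[
P'=\{\{1,3,\dots,m+2\},\{2\}\}.
\]

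The only point needing care, and it is mild, is that the partition genuinely changes rather than being a relabeling. This is exactly the issue flagged in the introduction, where simultaneous reassignment of whole groups may preserve the partition. The anchors rule this out: the pair $\{1,3\}$ is separated in $P$ but joined in $P'$, so $\mathbf 1_P(1,3)\ne\mathbf 1_{P'}(1,3)$ and $P\ne P'$. Counting disagreeing pairs gives $d(P,P')=2m/\binom{m+2}{2}>0$: each moved index disagrees with both anchors, while pairs among the moved indices, and the anchor pair, agree. This completes the plan.
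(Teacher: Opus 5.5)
Your proposal is correct and follows the paper's proof: the same centers, the same anchors $(-2,0)$ and $(2,0)$, the points $(\delta,s)$ reflected to $(-\delta,s)$, and $2\delta<\varepsilon$. You add an explicit sign computation for the assignments and an exact value of $d(P,P')$, where the paper simply asserts the assignments and that $P\ne P'$.
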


\begin{proof}
Let $\varepsilon > 0$ and choose $\delta > 0$ such that $2\delta < \varepsilon$.

Let
\[
c_1 = (-1,0), \qquad c_2 = (1,0).
\]
Define anchor points
\[
a_1 = (-2,0), \qquad a_2 = (2,0),
\]
and for $i = 1,\ldots,m$, define
$x_i = (\delta,i).$ Each $x_i$ is assigned to $c_2$, while $a_1$ is assigned to $c_1$ and $a_2$ to $c_2$. Label the indices as $1,2,\ldots,m+2$, where $1$ corresponds to $a_1$, $2$ to $a_2$, and $3,\ldots,m+2$ to $x_1,\ldots,x_m$. The induced partition is
$P = \{\{1\}, \{2,3,\ldots,m+2\}\}.$
Now, we define
\[
x_i' = (-\delta,i), \qquad i = 1,\ldots,m,
\]
and keep $a_1,a_2$ fixed. Then,
$\|X - X'\| = 2\delta < \varepsilon.$
After perturbation, all $x_i'$ are assigned to $c_1$. Thus the induced partition becomes
$P' = \{\{1,3,\ldots,m+2\}, \{2\}\}.$ At least $m$ indices change their assigned centers. The partition differs, so $P \ne P'$.
\end{proof}

\subsection{Instability from near-boundary configurations}

\begin{proposition}
There exist configurations with arbitrarily small margin such that arbitrarily small perturbations change the induced partition.
\end{proposition}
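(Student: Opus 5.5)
The plan is to build an explicit family of configurations $X_\delta$, indexed by a parameter $\delta>0$, in the same setting $c_1=(-1,0)$, $c_2=(1,0)$ in $\mathbb{R}^2$. We compute the margin of $X_\delta$ directly and show that $\gamma_{\min}(X_\delta)\to 0$ as $\delta\to 0$. Then, for any prescribed $\varepsilon>0$, we choose $\delta$ so that $2\delta<\varepsilon$ and exhibit a perturbation of size at most $\varepsilon$ that changes the induced partition. Following the anchored construction above, take
\[
X_\delta=(a_1,a_2,x_3),\qquad a_1=(-2,0),\quad a_2=(2,0),\quad x_3=(\delta,0).
\]

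The first step is computing the margins, using the two-center formula $\gamma_i=\bigl|\|x_i-c_1\|-\|x_i-c_2\|\bigr|$, which is also the $k=2$ case of the general margin. For the anchors, $\gamma_1=\gamma_2=|1-3|=2$. For the third point, $\|x_3-c_1\|=1+\delta$ and $\|x_3-c_2\|=1-\delta$ when $\delta<1$, so $\gamma_3=2\delta$. Hence $\gamma_{\min}=2\delta$, which can be made arbitrarily small. More generally, for any point $(t,y)$ the difference of distances to the two centers tends to $0$ as $t\to 0$, so the same argument works with points placed off the axis as well.

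The second step is the perturbation. Set $x_3'=(-\delta,0)$ and leave the anchors fixed, so that $\|X_\delta-X_\delta'\|=2\delta<\varepsilon$. As in the single-point instability proposition, $x_3$ moves from $c_2$ to $c_1$, and the partition changes from $\{\{1\},\{2,3\}\}$ to $\{\{1,3\},\{2\}\}$. It is worth checking this against the earlier proposition on which points can switch: the switching point satisfies $\gamma_3=2\delta\le 2\varepsilon$, as that proposition requires. This shows that the bound $r(X,A)\ge\gamma_{\min}/2$ is tight up to a constant here, since $r(X_\delta,A)\le 2\delta=\gamma_{\min}$.

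The main subtlety is not computational but a matter of quantifiers. The statement has to be read as "for every $\eta,\varepsilon>0$ there is a configuration with $\gamma_{\min}<\eta$ and a perturbation of size at most $\varepsilon$ that changes $A$." Taking $\delta<\min(\eta,\varepsilon)/2$ handles both requirements at once. A second point to address explicitly is that a change of assignment need not change the partition, as noted in the introduction. This is exactly why the anchors are used: they keep the labels of the two blocks distinguishable, so that the reassignment of index $3$ genuinely alters $\mathbf{1}(1,3)$ and $\mathbf{1}(2,3)$. In fact this gives $d(P,P')=2/3$.
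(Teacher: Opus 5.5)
Your proof is correct and is essentially the paper's proof. It uses the same construction: anchors $a_1=(-2,0)$ and $a_2=(2,0)$, with the point $(\delta,0)$ moved to $(-\delta,0)$, which changes the partition from $\{\{1\},\{2,3\}\}$ to $\{\{1,3\},\{2\}\}$. You also make explicit two things the paper leaves implicit: that $\gamma_{\min}=2\delta$ for $\delta<1$, and how both ``arbitrarily small'' requirements are met at once by one choice of $\delta$.
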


\begin{proof}
Let
\[
c_1 = (-1,0), \qquad c_2 = (1,0),
\]
and fix anchor points
\[
a_1 = (-2,0), \qquad a_2 = (2,0).
\]
Let $\delta > 0$ be small and define
$x = (\delta,0).$
Then $x$ is assigned to $c_2$. Label the indices as $1,2,3$ corresponding to $a_1,a_2,x$. The induced partition is
$P = \{\{1\}, \{2,3\}\}.$
Now, we define
$x' = (-\delta,0).$
Then,
$\|x - x'\| = 2\delta,$
which can be made arbitrarily small, and $x'$ is assigned to $c_1$. The induced partition becomes
$P' = \{\{1,3\}, \{2\}\}.$
Thus, $P \ne P'$. The partition changes under arbitrarily small perturbations.
\end{proof}

These constructions show that the margin condition provides a sufficient condition for stability, but cannot be removed in general. Instability arises when some points lie close to the decision boundary, while other points remain well separated and anchor the cluster structure. In such settings, small perturbations can move points across the boundary, causing assignment changes that may alter the induced partition.

We emphasize that these examples show that assignment changes can lead to changes in the induced partition. However, assignment changes do not in general imply a change in the partition, since partitions record only which indices are grouped together and do not encode the labels of the centers.

\section{Discrete-Time Extension}

We extend the framework to a discrete-time setting in which the data evolves step by step. The goal is to study how the induced partition changes over time under successive perturbations. We consider a sequence of finite labeled point configurations
$X(0), X(1), X(2), \ldots,$
where for each $t \ge 0$,
$X(t) = \big(x^{(t)}_1, x^{(t)}_2, \ldots, x^{(t)}_n\big) \in (\mathbb{R}^d)^n.$

We assume that the indexing of points is fixed over time. For each $i$, the sequence $x^{(t)}_i$ represents the same labeled point as it evolves. This allows us to compare configurations using the same index set.
Let $A$ be a clustering rule. For each $t$, define the induced partition
$P^{(t)} = A\big(X(t)\big).$
This produces a sequence of partitions. We study how this sequence changes over time.

\begin{definition}[One-step perturbation size]
For each $t \ge 0$, define
\[
\delta_t = \|X(t+1) - X(t)\| = \max_{1 \le i \le n} \|x^{(t+1)}_i - x^{(t)}_i\|.
\]
\end{definition}

\begin{definition}[Instability time]
Let $\eta > 0$. Define
\[
\tau_\eta = \inf \left\{ t \ge 1 : d\big(P^{(0)}, P^{(t)}\big) \ge \eta \right\}.
\]
If no such time exists, we set $\tau_\eta = \infty$.
\end{definition}

The value $\tau_\eta$ is the first time at which the partition differs from the initial partition by at least $\eta$ in partition distance.

We first bound the total change between two time steps.

\begin{lemma}[Cumulative perturbation bound]
For any integers $0 \le s < t$,
\[
\|X(t) - X(s)\| \le \sum_{r=s}^{t-1} \delta_r.
\]
\end{lemma}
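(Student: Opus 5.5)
The plan is to prove the bound coordinatewise and then take the maximum over indices. The key point is that $\|\cdot\|$ on configurations is the maximum of Euclidean norms, so the triangle inequality holds pointwise in each index. The argument also goes through for the configuration norm directly, since $\max_i\|\cdot\|$ is itself a norm on $(\mathbb{R}^d)^n$.

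First, fix an index $i \in [n]$ and write the displacement as a telescoping sum:
\[
x^{(t)}_i - x^{(s)}_i = \sum_{r=s}^{t-1} \big( x^{(r+1)}_i - x^{(r)}_i \big).
\]
Applying the triangle inequality for the Euclidean norm in $\mathbb{R}^d$ repeatedly (formally, by induction on $t-s$) gives
\[
\|x^{(t)}_i - x^{(s)}_i\| \le \sum_{r=s}^{t-1} \|x^{(r+1)}_i - x^{(r)}_i\|.
\]

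Second, bound each summand by the one-step perturbation size. By the definition of $\delta_r$ as a maximum over indices,
\[
\|x^{(r+1)}_i - x^{(r)}_i\| \le \delta_r \quad \text{for every } i.
\]
Hence $\|x^{(t)}_i - x^{(s)}_i\| \le \sum_{r=s}^{t-1}\delta_r$, and the right-hand side does not depend on $i$.

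Finally, take the maximum over $i$ on the left. By the definition of the perturbation size this maximum is $\|X(t)-X(s)\|$, which yields the claim.

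There is no real obstacle. The only point needing care is to bound each index separately before taking the maximum, rather than interchanging the maximum and the sum. Doing it in this order uses only the inequality $\max_i \sum_r a_{i,r} \le \sum_r \max_i a_{i,r}$, which is immediate.
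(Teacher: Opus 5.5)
Your proof is correct and follows essentially the same route as the paper: a per-index triangle inequality along the telescoping sum, then bounding by $\delta_r$ and taking the maximum over $i$. The paper phrases the last step as ``the maximum of a sum is bounded by the sum of maxima,'' which is exactly what your per-summand bound $\|x^{(r+1)}_i - x^{(r)}_i\| \le \delta_r$ accomplishes.
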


\begin{proof}
Fix $i \in \{1,\ldots,n\}$. By repeated application of the triangle inequality,
\[
\|x^{(t)}_i - x^{(s)}_i\|
\le \sum_{r=s}^{t-1} \|x^{(r+1)}_i - x^{(r)}_i\|.
\]
Taking the maximum over $i$, we obtain
\[
\|X(t) - X(s)\| = \max_i \|x^{(t)}_i - x^{(s)}_i\|
\le \max_i \sum_{r=s}^{t-1} \|x^{(r+1)}_i - x^{(r)}_i\|.
\]
Since the maximum of a sum is bounded by the sum of maxima,
\[
\|X(t) - X(s)\|
\le \sum_{r=s}^{t-1} \max_i \|x^{(r+1)}_i - x^{(r)}_i\|
= \sum_{r=s}^{t-1} \delta_r.
\]
\end{proof}

This shows that the total drift from time $s$ to time $t$ is controlled by the sum of the one-step perturbations. We now relate cumulative perturbation to stability.

\begin{proposition}[Persistence under cumulative control]
Let $r(X(0),A)$ be the stability radius of the initial configuration. If
\[
\sum_{r=0}^{t-1} \delta_r < r(X(0),A),
\]
then
\[
P^{(t)} = P^{(0)}.
\]
\end{proposition}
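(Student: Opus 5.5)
The plan is to combine the Cumulative perturbation bound with the definition of the stability radius $r(X(0),A)$. First I dispose of the trivial case $t=0$: the sum is empty and $P^{(0)}=P^{(0)}$. For $t\ge 1$, I apply the Cumulative perturbation bound with $s=0$, which gives
\[
\|X(t)-X(0)\| \le \sum_{r=0}^{t-1}\delta_r =: \varepsilon_t .
\]
By hypothesis, $\varepsilon_t < r(X(0),A)$.

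Next I unpack the supremum in the definition of the stability radius. Since $\varepsilon_t$ is strictly less than the supremum, there is some admissible $\varepsilon^\ast$ with $\varepsilon_t < \varepsilon^\ast \le r(X(0),A)$. Admissible means that $A(X')=A(X(0))$ for every $X'$ with $\|X(0)-X'\|\le\varepsilon^\ast$.

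The key observation, which is the only real subtlety, is that the admissible set is downward closed. If the invariance holds for all $X'$ in the closed ball of radius $\varepsilon^\ast$, it holds in every smaller ball. So the sup being strictly larger than $\varepsilon_t$ produces a witness at least as large as $\varepsilon_t$. This is where strictness of the hypothesis matters: at $\varepsilon_t = r$ itself the supremum need not be attained, since the boundary ball may contain a partition-changing configuration.

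Finally, I take $X'=X(t)$. Since $\|X(0)-X(t)\|\le\varepsilon_t\le\varepsilon^\ast$, we get $A(X(t))=A(X(0))$, that is, $P^{(t)}=P^{(0)}$.

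I would add a remark on the case $r(X(0),A)=\infty$. The hypothesis is then automatic, and the same argument applies, since any finite $\varepsilon_t$ lies below some admissible $\varepsilon^\ast$.

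I expect no step to be a genuine obstacle. The only care needed is the supremum argument: choosing an admissible $\varepsilon^\ast$ exceeding $\varepsilon_t$, rather than evaluating the defining property at $r$ itself.
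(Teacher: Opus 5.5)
Your proposal is correct and follows the paper's proof: apply the cumulative perturbation bound with $s=0$, then invoke the definition of the stability radius. Your explicit choice of an admissible $\varepsilon^\ast > \varepsilon_t$ from the supremum makes rigorous the step the paper compresses into ``by the definition of the stability radius.'' Downward closure is not needed, since $\varepsilon^\ast$ itself already serves as the witness.
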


\begin{proof}
By the cumulative perturbation bound,
\[
\|X(t) - X(0)\| \le \sum_{r=0}^{t-1} \delta_r.
\]
If this quantity is strictly smaller than $r(X(0),A)$, then by the definition of the stability radius,
\[
A\big(X(t)\big) = A\big(X(0)\big).
\]
Thus $P^{(t)} = P^{(0)}$.
\end{proof}

\begin{corollary}[Lower bound on instability time]
Let $\eta > 0$. If
\[
\sum_{r=0}^{t-1} \delta_r < r(X(0),A),
\]
then
\[
\tau_\eta > t.
\]
\end{corollary}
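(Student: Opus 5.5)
The plan is to reduce the corollary to the Persistence under cumulative control proposition, together with the definition of $\tau_\eta$. Suppose $\sum_{r=0}^{t-1}\delta_r < r(X(0),A)$. The step sizes $\delta_r$ are nonnegative, so the partial sums are nondecreasing in the upper index. Hence for every $s$ with $1 \le s \le t$ we have
\[
\sum_{r=0}^{s-1}\delta_r \le \sum_{r=0}^{t-1}\delta_r < r(X(0),A).
\]
For each such $s$, the Persistence proposition then gives $P^{(s)} = P^{(0)}$.

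Next I translate this into a statement about the partition distance. Since $d$ vanishes on identical partitions, $d(P^{(0)},P^{(s)}) = 0 < \eta$ for every $1 \le s \le t$, using the assumption $\eta > 0$. So no time $s \in \{1,\dots,t\}$ belongs to the set $\{s \ge 1 : d(P^{(0)},P^{(s)}) \ge \eta\}$. Its infimum $\tau_\eta$ is therefore either $\infty$ (empty set) or an integer $\ge t+1$. In both cases $\tau_\eta > t$.

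The only point needing care is the use of the Persistence proposition, or equivalently the definition of the stability radius. That definition is a supremum, and we need the conclusion at a displacement strictly below it. If $\|X(s)-X(0)\| \le \sum_{r<s}\delta_r = \varepsilon_0 < r(X(0),A)$, then by the definition of the supremum there is some $\varepsilon \ge \varepsilon_0$ in the defining set. For that $\varepsilon$, $A(X') = A(X(0))$ holds for all $X'$ with $\|X'-X(0)\| \le \varepsilon$, and in particular for $X' = X(s)$. The proposition already handles this, so I would simply cite it. The remaining step is the monotonicity of partial sums, which requires $\delta_r \ge 0$; this holds because each $\delta_r$ is a norm.
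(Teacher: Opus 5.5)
Your proof is correct and follows the paper's route: the Persistence proposition gives $P^{(s)}=P^{(0)}$, hence $d(P^{(0)},P^{(s)})=0<\eta$, and $\tau_\eta>t$ then follows from the definition. You are more careful than the paper, which applies the proposition only at the single time $t$ and leaves implicit that no earlier time $s<t$ can cross the threshold; your monotonicity-of-partial-sums step (using $\delta_r\ge 0$) supplies exactly that.
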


\begin{proof}
By the proposition, $P^{(t)} = P^{(0)}$, so
\[
d\big(P^{(0)}, P^{(t)}\big) = 0 < \eta.
\]
Thus the first time at which the partition distance reaches $\eta$ must occur after time $t$.
\end{proof}

We also record a local version of stability.

\begin{proposition}[Stepwise stability]
Suppose that for each $r = 0,1,\ldots,t-1$,
\[
\delta_r < r\big(X(r),A\big).
\]
Then
\[
P^{(0)} = P^{(1)} = \cdots = P^{(t)}.
\]
\end{proposition}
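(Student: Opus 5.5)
The plan is to argue step by step, applying the definition of the stability radius at each intermediate configuration rather than at $X(0)$. Fix $r \in \{0,1,\ldots,t-1\}$. By the definition of $\delta_r$, we have $\|X(r+1)-X(r)\| = \delta_r$. By hypothesis, $\delta_r < r(X(r),A)$. The first task is to pass from this strict inequality to the statement $A(X(r+1)) = A(X(r))$.

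Since $r(X(r),A)$ is a supremum, the strict inequality $\delta_r < r(X(r),A)$ yields some $\varepsilon$ in the defining set with $\varepsilon > \delta_r$. That $\varepsilon$ satisfies $A(X') = A(X(r))$ for every $X'$ with $\|X(r)-X'\| \le \varepsilon$. Taking $X' = X(r+1)$, whose distance from $X(r)$ is $\delta_r \le \varepsilon$, gives $P^{(r+1)} = P^{(r)}$.

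This supremum step is the only point needing care, and it is also the step implicitly used in the proof of Persistence under cumulative control. It works because the defining set is downward closed: if stability holds at radius $\varepsilon$, it holds at every smaller radius. So any number below the supremum is dominated by an element of the set. I expect no real obstacle beyond stating this explicitly. The one point to note is the degenerate case $r(X(r),A) = 0$: there the hypothesis cannot hold, since $\delta_r \ge 0$, so the case does not arise.

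Finally, chain the equalities by induction on $r$: from $P^{(0)} = P^{(1)}$, $P^{(1)} = P^{(2)}$, and so on up to $P^{(t-1)} = P^{(t)}$, we obtain $P^{(0)} = P^{(1)} = \cdots = P^{(t)}$. It is worth remarking that, unlike the Persistence under cumulative control result, this argument needs no cumulative bound from the Cumulative perturbation bound lemma. The stability radius is re-evaluated at each step, so the total drift $\sum_r \delta_r$ may exceed $r(X(0),A)$.
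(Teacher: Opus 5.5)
Your proposal is correct and matches the paper's proof: apply the definition of the stability radius at $X(r)$ to get $P^{(r+1)} = P^{(r)}$ from $\delta_r < r(X(r),A)$, then chain over $r = 0,\ldots,t-1$. Your explicit supremum step only spells out what the paper leaves implicit. Note that producing an element $\varepsilon > \delta_r$ of the defining set needs only that $\delta_r$ is not an upper bound of that set, so downward closedness is not actually required.
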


\begin{proof}
Fix $r$. Since
\[
\|X(r+1) - X(r)\| = \delta_r < r\big(X(r),A\big),
\]
the definition of the stability radius implies
\[
A\big(X(r+1)\big) = A\big(X(r)\big).
\]
Thus $P^{(r+1)} = P^{(r)}$. Iterating this argument for $r=0,\ldots,t-1$ gives
\[
P^{(0)} = P^{(1)} = \cdots = P^{(t)}.
\]
\end{proof}

These results give two complementary perspectives on stability over time. The first is global: if the cumulative perturbation from the initial configuration remains below the initial stability radius, then the initial partition is preserved. The second is local. If each step is small relative to the current stability radius, then the partition does not change at that step.

The fixed indexing assumption allows perturbations to be measured consistently over time. Under this assumption, the discrete-time framework provides a direct way to track when the partition changes and how cumulative perturbations control long-term behavior.

\section{A Simple Probabilistic Extension}

We give a simple probabilistic consequence of the margin-based argument. The goal is to describe how likely it is that points change their assigned centers under random perturbations. We continue to work with nearest-center assignment with fixed centers in $\mathbb{R}^d$, together with a deterministic tie-breaking rule.

\smallskip

Let
$X = (x_1, \ldots, x_n) \in (\mathbb{R}^d)^n.$ Let $c_1, \ldots, c_k \in \mathbb{R}^d$ be fixed centers. For each $i$, let $\ell_i$ be the index of the center to which $x_i$ is assigned, and define the margin
\[
\gamma_i = \min_{j \ne \ell_i} \left( \|x_i - c_j\| - \|x_i - c_{\ell_i}\| \right).
\]

We consider random perturbations of the form
$x_i' = x_i + \eta_i,$
where $\eta_1, \ldots, \eta_n$ are independent random vectors in $\mathbb{R}^d$. Let
\[
X' = (x_1', \ldots, x_n'), \qquad P = A(X), \qquad P' = A(X').
\]

\subsection{Pointwise switching bound}

\begin{proposition}
If the point $x_i$ changes its assigned center under the perturbation, then
\[
\|\eta_i\| \ge \frac{\gamma_i}{2}.
\]
Consequently,
\[
\mathbb{P}(\text{$x_i$ changes its assigned center}) 
\le \mathbb{P}\!\left( \|\eta_i\| \ge \frac{\gamma_i}{2} \right).
\]
\end{proposition}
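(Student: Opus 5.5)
The plan is to reduce the statement to the boundary-margin criterion, applied pointwise and for a single realization of the noise. Fix $i$ and an outcome of the random vectors. Since $x_i' = x_i + \eta_i$, the displacement of the $i$-th point is exactly $\|x_i' - x_i\| = \|\eta_i\|$.

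The boundary-margin criterion involves only the $i$-th point, even though it is stated with $\|X - X'\|\le\varepsilon$. Its proof uses only the bound $\|x_i - x_i'\|\le\varepsilon$ in the two triangle inequalities. The center $x_i$ is assigned to is determined by $x_i$ alone, because the centers are fixed and the tie-breaking is deterministic. So the criterion applies with $\varepsilon = \|\eta_i\|$ and no control on the other coordinates.

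I would therefore argue by contrapositive. Suppose $\|\eta_i\| < \gamma_i/2$. The triangle inequalities give, for every $j \ne \ell_i$,
\[
\|x_i' - c_j\| - \|x_i' - c_{\ell_i}\| \ge \gamma_i - 2\|\eta_i\| > 0.
\]
Hence $c_{\ell_i}$ is the strict unique nearest center of $x_i'$. Tie-breaking plays no role, and $x_i$ keeps its assigned center. Equivalently, if $x_i$ switches, then $\|\eta_i\| \ge \gamma_i/2$. This is the same argument as the proposition localizing switches to $\gamma_i \le 2\varepsilon$, now with $\varepsilon$ replaced by the random quantity $\|\eta_i\|$.

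For the probabilistic bound, I would note the event inclusion
\[
\{x_i \text{ changes its assigned center}\} \subseteq \{\|\eta_i\| \ge \gamma_i/2\}
\]
and apply monotonicity of probability. The left event is measurable, since the assignment of $x_i + \eta_i$ is a Borel function of $\eta_i$: it is defined by finitely many inequalities between continuous functions. Independence of the $\eta_i$ is not needed for this pointwise statement.

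The only step needing care is the reduction of the criterion from the global perturbation size to the single coordinate $i$. Re-running its two-line triangle-inequality argument directly with $\|\eta_i\|$ settles this.
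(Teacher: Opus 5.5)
Your proof is correct and follows the paper's argument: by contrapositive, you apply the boundary-margin criterion with $\varepsilon = \|\eta_i\|$, then use the event inclusion and monotonicity of probability. You also note explicitly that the criterion's proof uses only the $i$-th coordinate, so it applies without any control on $\|X - X'\|$ globally; this fills in a step the paper's proof leaves implicit.
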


\begin{proof}
If $\|\eta_i\| < \gamma_i/2$, then $\|x_i - x_i'\| < \gamma_i/2$. By the boundary-margin criterion from Section 5, the point $x_i$ does not change its assigned center. Therefore, if $x_i$ changes its assigned center, we must have $\|\eta_i\| \ge \gamma_i/2$. Taking probabilities gives the result.
\end{proof}

This reduces switching of a point to a tail event for the perturbation magnitude.

\subsection{Expected number of switched points}

Let $N_{\mathrm{sw}}$ be the number of indices whose assigned centers change under the perturbation. Then
$N_{\mathrm{sw}} = \sum_{i=1}^n I_i,$
where $I_i$ is the indicator of the event that $x_i$ changes its assigned center.

\begin{proposition}
We have
\[
\mathbb{E}[N_{\mathrm{sw}}] \le \sum_{i=1}^n 
\mathbb{P}\!\left( \|\eta_i\| \ge \frac{\gamma_i}{2} \right).
\]
\end{proposition}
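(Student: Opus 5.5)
The bound follows from linearity of expectation combined with the pointwise switching bound proved just above. We start from the decomposition $N_{\mathrm{sw}} = \sum_{i=1}^n I_i$, where $I_i$ is the indicator of the event that $x_i$ changes its assigned center. Each $I_i$ is a bounded, nonnegative random variable, so its expectation exists. Linearity of expectation gives
\[
\mathbb{E}[N_{\mathrm{sw}}] = \sum_{i=1}^n \mathbb{E}[I_i] = \sum_{i=1}^n \mathbb{P}(\text{$x_i$ changes its assigned center}).
\]
Independence of the $\eta_i$ is not needed here; linearity holds for arbitrary dependence.

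Next we bound each summand using the preceding proposition on pointwise switching. That proposition, which rests on the boundary-margin criterion, states that the switching event for index $i$ is contained in the event $\{\|\eta_i\| \ge \gamma_i/2\}$. Monotonicity of probability then gives $\mathbb{P}(\text{$x_i$ switches}) \le \mathbb{P}(\|\eta_i\| \ge \gamma_i/2)$. Summing over $i$ yields the claimed inequality.

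The only point needing care is measurability: the switching event must be an event, so that $I_i$ is a random variable. The assigned center of $x_i'$ is determined by finitely many comparisons of the continuous functions $\|x_i + \eta_i - c_j\|$, combined with the fixed tie-breaking rule. Hence the set of $\eta_i$ values that produce a switch is a finite Boolean combination of closed and open sets, and is therefore Borel. With that observed, the argument is a direct two-line computation, and I expect no real obstacle.
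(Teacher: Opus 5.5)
Your proof is correct and follows the paper's argument exactly: linearity of expectation writes $\mathbb{E}[N_{\mathrm{sw}}]$ as the sum of switching probabilities, and the pointwise switching bound controls each term. Your measurability remark is extra care the paper omits, and it holds provided the tie-breaking rule is itself measurable (for example, choosing the lowest-indexed tied center).
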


\begin{proof}
By linearity of expectation,
\[
\mathbb{E}[N_{\mathrm{sw}}] = \sum_{i=1}^n \mathbb{E}[I_i] 
= \sum_{i=1}^n \mathbb{P}(\text{$x_i$ changes its assigned center}).
\]
Applying the previous proposition to each term gives the result.
\end{proof}

\subsection{Connection to partition distance}

We now relate the number of switched indices to the partition distance.

\begin{proposition}
Let $P = A(X)$ and $P' = A(X')$ arise from nearest-center assignment with the same fixed centers and the same deterministic tie-breaking rule. If at most $N_{\mathrm{sw}}$ indices change their assigned centers, then
\[
d(P, P') \le \min\!\left(1, \frac{2N_{\mathrm{sw}}}{n-1}\right).
\]
\end{proposition}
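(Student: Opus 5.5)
The plan is to bound the number of unordered pairs on which $P$ and $P'$ disagree by counting pairs that touch a switched index. Let $S\subseteq[n]$ be the set of indices whose assigned center changes, so $|S| = N_{\mathrm{sw}}$, or $|S| \le N_{\mathrm{sw}}$ under the "at most" reading. The key observation is that in nearest-center assignment with fixed centers, $\mathbf{1}_P(i,j)=1$ if and only if $\ell_i=\ell_j$, where $\ell_i$ is the assigned center index of $x_i$. Likewise $\mathbf{1}_{P'}(i,j)=1$ if and only if $\ell_i'=\ell_j'$. This holds because blocks of $A(X)$ are exactly the nonempty preimages of the assignment map $i\mapsto \ell_i$.

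The first step is to show that any disagreeing pair $\{i,j\}$ must meet $S$. If neither $i$ nor $j$ lies in $S$, then $\ell_i'=\ell_i$ and $\ell_j'=\ell_j$. Hence $\ell_i=\ell_j$ holds if and only if $\ell_i'=\ell_j'$, and therefore $\mathbf{1}_P(i,j)=\mathbf{1}_{P'}(i,j)$. So the disagreement set is contained in the set of pairs with at least one endpoint in $S$.

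The second step counts these pairs. Each $i\in S$ lies in exactly $n-1$ unordered pairs, so the number of pairs meeting $S$ is at most $|S|(n-1)$. A sharper count $\binom{n}{2}-\binom{n-|S|}{2}$ is available but is not needed. Dividing by $\binom{n}{2}=n(n-1)/2$ gives
\[
d(P,P') \le \frac{|S|(n-1)}{n(n-1)/2} = \frac{2|S|}{n} \le \frac{2N_{\mathrm{sw}}}{n} \le \frac{2N_{\mathrm{sw}}}{n-1}.
\]
The bound $d\le 1$ is immediate from the definition of $d$, since the disagreement set has at most $\binom{n}{2}$ elements. Combining the two bounds gives the minimum.

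There is no real obstacle here; the argument is a direct counting bound. The one point needing care is the first step. There I must justify that the same-block relation is determined by the center labels through the deterministic assignment, and that this remains valid even when some centers receive no points, since such centers simply produce no block. Once that is established, the rest is arithmetic. The stated bound with $n-1$ in the denominator is slightly weaker than the $2N_{\mathrm{sw}}/n$ obtained above, so it follows a fortiori.
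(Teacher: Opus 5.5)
Your proof is correct and follows the same route as the paper: pairs with no switched endpoint keep their same-block status, and each switched index lies in at most $n-1$ pairs, so one divides $N_{\mathrm{sw}}(n-1)$ by $\binom{n}{2}$. Your arithmetic is the accurate one, since $N_{\mathrm{sw}}(n-1)/\binom{n}{2} = 2N_{\mathrm{sw}}/n$, whereas the paper writes this quotient as equal to $2N_{\mathrm{sw}}/(n-1)$; the stated bound still holds because $2N_{\mathrm{sw}}/n \le 2N_{\mathrm{sw}}/(n-1)$, exactly as you observe.
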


\begin{proof}
Each index that changes its assigned center can affect at most $(n-1)$ unordered pairs involving that index. Thus the total number of pairs whose same-block relation may change is at most $N_{\mathrm{sw}}(n-1)$. Dividing by $\binom{n}{2}$ gives
\[
d(P, P') \le \frac{N_{\mathrm{sw}}(n-1)}{\binom{n}{2}} 
= \frac{2N_{\mathrm{sw}}}{n-1}.
\]
Since $d(P,P') \le 1$ by definition, the stated bound follows.
\end{proof}

\begin{corollary}
We have
\[
\mathbb{E}[d(P,P')] \le \frac{2}{n-1} 
\sum_{i=1}^n \mathbb{P}\!\left( \|\eta_i\| \ge \frac{\gamma_i}{2} \right).
\]
\end{corollary}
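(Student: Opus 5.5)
The corollary should follow by taking expectations in the deterministic bound of the preceding proposition and then applying the bound on $\mathbb{E}[N_{\mathrm{sw}}]$. Work pointwise on the sample space. For each realization of $(\eta_1,\ldots,\eta_n)$, the configuration $X'$ is fixed, and $N_{\mathrm{sw}}$ is the number of indices whose assigned centers change. Since $P$ and $P'$ come from nearest-center assignment with the same fixed centers and the same tie-breaking rule, the proposition on partition distance gives, almost surely,
\[
d(P,P') \le \min\!\left(1,\frac{2N_{\mathrm{sw}}}{n-1}\right) \le \frac{2N_{\mathrm{sw}}}{n-1}.
\]
The second inequality drops the minimum, which is harmless because we only need an upper bound.

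Next, take expectations of both sides. Both $d(P,P')$ and $N_{\mathrm{sw}}$ are bounded random variables, with values in $[0,1]$ and $\{0,\ldots,n\}$ respectively, so all expectations are finite. Monotonicity and linearity of expectation then give $\mathbb{E}[d(P,P')] \le \frac{2}{n-1}\mathbb{E}[N_{\mathrm{sw}}]$. Substituting the earlier estimate
\[
\mathbb{E}[N_{\mathrm{sw}}]\le\sum_{i=1}^n\mathbb{P}(\|\eta_i\|\ge\gamma_i/2),
\]
which itself came from the pointwise switching bound via the boundary-margin criterion, yields exactly the stated inequality.

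The only step needing care is measurability: $d(P,P')$ and the indicators $I_i$ must be random variables. I would note that each assignment $\ell_i'$ is determined by finitely many comparisons of continuous functions $\|x_i+\eta_i-c_j\|$, so the events $\{\ell'_i=j\}$ are Borel. Hence $I_i$, $N_{\mathrm{sw}}$ and $d(P,P')$, which is a finite combination of these indicators, are measurable. Independence of the $\eta_i$ is not needed, since the argument uses only linearity of expectation. I do not expect any real obstacle; the corollary is a direct combination of the two preceding propositions.
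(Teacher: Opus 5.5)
Your proposal is correct and follows the paper's intended route. The paper leaves the proof implicit; you combine the pointwise bound $d(P,P')\le 2N_{\mathrm{sw}}/(n-1)$ with $\mathbb{E}[N_{\mathrm{sw}}]\le\sum_i\mathbb{P}(\|\eta_i\|\ge\gamma_i/2)$ by monotonicity and linearity of expectation, with a measurability remark and the correct observation that independence is unused. One aside: the paper's identity $N_{\mathrm{sw}}(n-1)/\binom{n}{2}=2N_{\mathrm{sw}}/(n-1)$ actually evaluates to $2N_{\mathrm{sw}}/n$, so the inequality you inherit holds with room to spare.
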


This shows that the expected change in the partition is controlled by the margins through tail probabilities of the perturbations.

\subsection{Special cases}

We record two cases where the bounds become explicit.

\begin{corollary}[Bounded noise]
Assume that $\|\eta_i\| \le \rho$ almost surely for every $i$. If
\[
\gamma_i > 2\rho,
\]
then
\[
\mathbb{P}(\text{$x_i$ changes its assigned center}) = 0.
\]
In particular, if $\gamma_{\min} > 2\rho$, then
\[
\mathbb{P}(P' = P) = 1.
\]
\end{corollary}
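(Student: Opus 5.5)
The plan is to derive the corollary directly from the pointwise switching bound, treating the two assertions in turn.

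For the first assertion, fix $i$ with $\gamma_i > 2\rho$. By the pointwise switching bound,
\[
\mathbb{P}(\text{$x_i$ changes its assigned center}) \le \mathbb{P}\!\left(\|\eta_i\| \ge \tfrac{\gamma_i}{2}\right).
\]
Since $\gamma_i/2 > \rho$ and $\|\eta_i\| \le \rho$ almost surely, the event $\{\|\eta_i\| \ge \gamma_i/2\}$ is contained in the null event $\{\|\eta_i\| > \rho\}$. Its probability is therefore $0$, which gives the first claim.

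Alternatively, one can argue without probability at all. On the almost-sure event $\|\eta_i\|\le\rho$ we have $\|x_i - x_i'\| \le \rho < \gamma_i/2$. The boundary-margin criterion then rules out a switch, so switching can only occur on a null set.

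For the second assertion, assume $\gamma_{\min} > 2\rho$. Then $\gamma_i \ge \gamma_{\min} > 2\rho$ for every $i$, so each of the $n$ events "$x_i$ switches" has probability zero. By a union bound over finitely many indices, the event that some index switches also has probability zero.

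On the complementary event, which has probability one, no index changes its assigned center. Cluster membership is determined by assigned centers, with the same fixed centers and the same tie-breaking rule. Hence all same-block relations are preserved and $P' = P$, exactly as in the global no-switch condition. Equivalently, on the almost-sure event $\max_i\|\eta_i\|\le\rho$ we have $\|X-X'\|\le\rho<\gamma_{\min}/2$, and the global no-switch condition applies directly.

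There is no substantive obstacle here. The only care needed is that "almost surely" must be converted into a single probability-one event: intersect the $n$ events $\{\|\eta_i\|\le\rho\}$, which is legitimate because there are finitely many. The strict inequality $\gamma_i>2\rho$ is also essential, since it makes $\rho<\gamma_i/2$ strict, as the boundary-margin criterion requires.
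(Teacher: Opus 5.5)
Your proposal is correct and takes essentially the paper's route: the paper's proof is just the observation that $\|\eta_i\| \le \rho < \gamma_i/2$ almost surely, so the boundary-margin criterion rules out a switch. That is your ``alternative'' argument, and your primary argument via the pointwise switching bound rests on the same criterion; your handling of the ``in particular'' clause, intersecting the finitely many almost-sure events $\{\|\eta_i\|\le\rho\}$ and then applying the global no-switch condition, correctly supplies a step the paper leaves implicit.
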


\begin{proof}
If $\|\eta_i\| \le \rho < \gamma_i/2$, then by the boundary-margin criterion, the point cannot change its assigned center.
\end{proof}

\begin{corollary}[Gaussian noise]
Assume that $\eta_i \sim \mathcal{N}(0, \sigma^2 I_d)$ independently for each $i$. Then for each $i$,
\[
\mathbb{P}(\text{$x_i$ changes its assigned center}) 
\le \mathbb{P}\!\left( \|\eta_i\| \ge \frac{\gamma_i}{2} \right).
\]
\end{corollary}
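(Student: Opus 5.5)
This corollary is the pointwise switching bound from Section 7 specialized to Gaussian perturbations. The bound there used no distributional assumption on $\eta_i$ beyond its being a random vector in $\mathbb{R}^d$. So the plan is to check that the Gaussian model fits that framework and then invoke the earlier proposition for each fixed $i$.

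\textbf{Step 1: the model fits.} The vectors $\eta_i \sim \mathcal{N}(0,\sigma^2 I_d)$ are independent random vectors in $\mathbb{R}^d$. Hence $x_i' = x_i + \eta_i$ is exactly the perturbation model of this section. The map $\eta \mapsto \|\eta\|$ is continuous, so $\{\|\eta_i\| \ge \gamma_i/2\}$ is measurable. The event that $x_i$ changes its assigned center is determined by finitely many comparisons of the continuous functions $\|x_i+\eta_i - c_j\|$ together with the deterministic tie-breaking rule, so it is also measurable.

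\textbf{Step 2: the inclusion of events.} By the boundary-margin criterion of Section 5, applied with $\varepsilon = \|\eta_i\|$, the event $\{\|\eta_i\| < \gamma_i/2\}$ forces $x_i'$ to keep its assigned center $c_{\ell_i}$. The strict inequality established in that lemma also rules out ties, so the tie-breaking rule does not interfere. Taking complements, $\{x_i \text{ switches}\} \subseteq \{\|\eta_i\| \ge \gamma_i/2\}$. Monotonicity of probability then gives the stated inequality; this is precisely the pointwise switching bound.

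\textbf{Optional explicit form.} I would remark that $\|\eta_i\|^2/\sigma^2 \sim \chi^2_d$, so the right-hand side equals $\mathbb{P}\bigl(\chi^2_d \ge \gamma_i^2/(4\sigma^2)\bigr)$. Standard chi-square concentration, for example Laurent--Massart, then gives a decay of order $\exp\bigl(-(\gamma_i/(2\sigma)-\sqrt d)^2/2\bigr)$ once $\gamma_i/2 > \sigma\sqrt d$. This is not needed for the statement as written.

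There is no substantive obstacle. The only care needed is the measurability and tie-handling in Steps 1 and 2, and the strict inequality in the boundary-margin criterion already handles ties.
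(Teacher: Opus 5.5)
Your proposal is correct and follows the paper's route. The paper gives no separate proof: the corollary is just the pointwise switching bound of Section 7, whose proof is exactly your Step 2, namely the boundary-margin criterion applied to the displacement $\|x_i - x_i'\| = \|\eta_i\| < \gamma_i/2$. Applying it with $\varepsilon = \|\eta_i\|$ is legitimate because that lemma's proof uses only the $i$-th displacement, even though its hypothesis is phrased via $\|X - X'\|$. Your measurability, tie-breaking and chi-square tail remarks are correct but not needed for the statement.
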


\begin{remark}
Under Gaussian noise, $\|\eta_i\|$ has a chi-type distribution. Standard tail bounds imply that
\[
\mathbb{P}\!\left( \|\eta_i\| \ge \frac{\gamma_i}{2} \right)
\]
decays rapidly as $\gamma_i^2 / \sigma^2$ increases (for fixed dimension $d$). Thus points with large margin are unlikely to change their assigned centers, while points with small margin are more likely to switch.
\end{remark}

This section shows that the margin provides not only a sufficient condition for worst-case stability, but also a way to control average behavior under random perturbations. Points with large margin are unlikely to change their assigned centers, while points with small margin contribute most to expected changes. In general, assignment changes may or may not alter the induced partition, but the expected partition distance can be bounded in terms of these switching probabilities.

\section{Illustrative Examples}

This section provides simple examples and simulation setups that illustrate the definitions and bounds developed in the previous sections. These examples are intended to give intuition for how the induced partition behaves under perturbations, rather than to provide formal statistical results.

We work in $\mathbb{R}^2$ and use nearest-center assignment with fixed centers and a deterministic tie-breaking rule.

\subsection{Data model and perturbations}

We consider a finite labeled point configuration
\[
X = (x_1, \ldots, x_n) \in (\mathbb{R}^2)^n,
\]
with two fixed centers
\[
c_1 = (-1,0), \qquad c_2 = (1,0).
\]
Points are generated near the centers. For each index $i \in [n]$, we choose a center $c_{\ell_i}$ and sample
$x_i = c_{\ell_i} + \xi_i,$
where $\xi_i \sim \mathcal{N}(0,\sigma_0^2 I_2)$. Typical parameters are $n = 200$ and $\sigma_0 = 0.2$.
We consider two perturbation models. In the bounded perturbation model, for a fixed $\varepsilon > 0$, we sample $\eta_i$ uniformly from the disk of radius $\varepsilon$ and define
$x_i' = x_i + \eta_i.$
In the Gaussian perturbation model, for a fixed $\sigma > 0$, we take
$\eta_i \sim \mathcal{N}(0,\sigma^2 I_2)$
independently and define $x_i' = x_i + \eta_i$. In both cases, we form
\[
X' = (x_1', \ldots, x_n'), \qquad P = A(X), \qquad P' = A(X').
\]

\subsection{Stable configuration}

Consider a configuration in which all points are well separated from the decision boundary $x=0$, so that the margins $\gamma_i$ are uniformly large. If $\varepsilon < \gamma_{\min}/2$, then by the deterministic stability result, no index changes its assigned center. Since the assignments remain unchanged, all pairwise relations are preserved, and therefore
$P' = P.$
This example illustrates the role of the minimum margin as a sufficient condition for stability.

\subsection{Near-boundary configuration}

We now consider configurations in which some indices have small margin, meaning that the corresponding points lie close to the decision boundary. Let $\varepsilon > 0$. By the structural bound, any index $i$ that changes its assigned center must satisfy
$\gamma_i \le 2\varepsilon.$
Thus assignment changes are localized near the boundary. Points with large margin remain stable, while points with small margin are sensitive to perturbations. Depending on how these points are arranged, assignment changes may lead to changes in the induced partition.

\subsection{Many-point switching configuration}

We describe a configuration in which many indices change their assigned centers under a small perturbation. Let $m \ge 1$, and define labeled points indexed by $[m+2]$ as follows:
\[
x_1 = (-2,0), \qquad x_2 = (2,0), \qquad x_{2+i} = (\delta,i), \quad i=1,\ldots,m,
\]
where $\delta > 0$ is small. The induced partition is
\[
P = \{\{1\}, \{2,3,\ldots,m+2\}\}.
\]
Define a perturbation by
\[
x_{2+i}' = (-\delta,i), \quad i=1,\ldots,m,
\]
and keep $x_1,x_2$ fixed. Then $\|X - X'\| = 2\delta$, which can be made arbitrarily small, while the induced partition becomes
\[
P' = \{\{1,3,\ldots,m+2\}, \{2\}\}.
\]
In this case, many indices change their assigned centers, and this produces a large change in the partition.

\subsection{Stability threshold behavior}

Define
\[
S(\varepsilon) = d\big(A(X), A(X')\big).
\]
From the deterministic bound, we expect the following qualitative behavior. If $\varepsilon < \gamma_{\min}/2$, then $S(\varepsilon) = 0$. Once $\varepsilon$ exceeds this threshold, assignment changes may occur, and $S(\varepsilon)$ may increase. The increase is driven by indices with small margin.

This illustrates how the threshold $\gamma_{\min}/2$ governs when changes in the partition can occur.

\subsection{Gaussian perturbations}

Under Gaussian perturbations, the probability that index $i$ changes its assigned center is bounded by
\[
\mathbb{P}(\|\eta_i\| \ge \gamma_i/2).
\]
Thus indices with large margin are unlikely to switch, while indices with small margin are more sensitive to noise. As the noise level increases, assignment changes become more frequent, and this may lead to gradual changes in the partition.

\subsection{Discrete-time evolution}

Consider a sequence of configurations
\[
X(0), X(1), \ldots, X(T),
\]
with one-step perturbations $\delta_t = \|X(t+1) - X(t)\|$. If
\[
\sum_{r=0}^{t-1} \delta_r < r(X(0),A),
\]
then by the discrete-time stability result,
\[
A(X(t)) = A(X(0)).
\]
This shows that stability over time is controlled by cumulative perturbation rather than individual steps.

These examples illustrate how the margin governs stability. Indices corresponding to points far from decision boundaries remain stable, while indices near boundaries are the primary source of assignment changes. Depending on the configuration, these assignment changes may or may not lead to changes in the induced partition.

\section{Discussion and Limitations}

In the framework of this paper, the induced partition is determined by the grouping of indices according to their assigned centers. More precisely, two indices lie in the same block if and only if their corresponding points are assigned to the same center under the fixed nearest-center rule.

If no index changes its assigned center under a perturbation, then the induced partition remains unchanged. However, the converse does not hold in general. It is possible for assignment changes to occur without changing the partition, since partitions record only which indices are grouped together and do not encode the labels of the centers. For example, simultaneous reassignment of entire groups from one center to another may preserve the partition structure. For this reason, the analysis distinguishes between assignment changes and changes in the induced partition.

The framework treats clustering as a partition of a finite labeled set. This allows the problem of stability to be expressed in terms of discrete objects such as partitions and pairwise relations. The definitions of partition distance and stability radius provide direct ways to measure how the output changes under perturbation.

The main results are developed for nearest-center assignment with fixed centers in Euclidean space. In this setting, the margin of each point provides a sufficient condition for stability and yields a lower bound on the stability radius. If all points have sufficiently large margin, then small perturbations do not change any assigned centers, and hence the induced partition remains unchanged. When many points have small margin, even small perturbations may lead to assignment changes, and depending on how these changes are distributed, the induced partition may also change.

The results identify where instability arises. Points that lie close to decision boundaries are the primary source of assignment changes. The constructions in Section 6 show that when such points are present, arbitrarily small perturbations can change one or many assignments. In those constructions, these assignment changes lead to changes in the induced partition, but this behavior depends on the structure of the configuration.

The discrete-time extension shows how stability can be maintained over multiple steps. If the cumulative perturbation remains below the initial stability radius, then the initial partition is preserved. The stepwise condition gives a complementary local view, where stability is maintained at each step provided the perturbation is small relative to the current configuration.

The probabilistic extension relates the margin to average behavior under random perturbations. Points with large margin are unlikely to change their assigned centers, while points with small margin have higher probability of switching. The expected change in the partition can be bounded in terms of these switching probabilities, although assignment changes do not always translate directly into proportional changes in the partition.

There are several limitations to this framework. The analysis is restricted to nearest-center assignment with fixed centers. It does not include randomness in the clustering rule itself, such as random initialization in $k$-means, and does not address optimization effects. The framework also focuses on Euclidean distance and does not treat more general metrics or graph-based clustering methods. In addition, high-dimensional effects are not studied.

The results should therefore be viewed as a simplified model that isolates the role of margin in stability. Within this setting, the framework gives explicit sufficient conditions under which clustering-based decisions are stable and identifies which points are most sensitive to perturbations.

\section{Conclusion}

We studied clustering through the partitions it induces on a finite labeled set and developed a framework to describe how these partitions change under perturbations of the data.

We introduced a partition distance and a notion of stability radius, and analyzed these quantities in the setting of nearest-center assignment with fixed centers. In this setting, we showed that the margin of each point provides a sufficient condition for stability and yields a lower bound on the stability radius. When all points have sufficiently large margin, no point changes its assigned center under small perturbations, and the induced partition remains unchanged.

We also constructed explicit examples demonstrating that instability can occur when points lie close to decision boundaries. In these examples, assignment changes lead to changes in the induced partition. More generally, assignment changes may or may not alter the partition, depending on how the grouping structure is affected.

We extended the framework to a discrete-time setting and gave conditions under which the partition remains unchanged over multiple steps, based on cumulative perturbations. We also gave a probabilistic extension that relates the likelihood of assignment changes to the margin and provides bounds on expected changes in the partition.

The main perspective of this work is that clustering can be studied through the partitions it produces, and that stability can be expressed in terms of how these partitions change under perturbation. In the setting considered here, the margin provides a direct way to control both worst-case and average behavior.

The results are limited to a fixed-center Euclidean model, but they give a clear starting point for further study of stability in clustering-based methods.

\appendix
\section{Appendix}

This appendix collects supporting arguments and technical details used in the main text. We include a detailed version of the boundary-margin argument, a bound on partition distance, and a brief note on the computational setup.

\subsection{Boundary-margin criterion (detailed form)}

We restate the boundary-margin criterion in expanded form.

\begin{lemma}
Let
\[
X = (x_1,\ldots,x_n), \qquad X' = (x_1',\ldots,x_n') \in (\mathbb{R}^d)^n
\]
satisfy
\[
\|X - X'\| \le \varepsilon.
\]
If
\[
\varepsilon < \frac{\gamma_i}{2},
\]
then the point $x_i$ does not change its assigned center.
\end{lemma}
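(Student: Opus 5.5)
The plan is a direct triangle-inequality argument, carried out pointwise for the fixed index $i$, comparing the assigned center $c_{\ell_i}$ against every competing center $c_j$ with $j\ne\ell_i$. First I will record that $\|X-X'\|\le\varepsilon$ gives $\|x_i-x_i'\|\le\varepsilon$, by the definition of the perturbation size as a maximum over indices. Then, for every center $c_m$, the reverse triangle inequality gives
\[
\bigl|\,\|x_i'-c_m\|-\|x_i-c_m\|\,\bigr|\le \|x_i'-x_i\|\le\varepsilon .
\]
I will use the upper estimate for $m=\ell_i$ and the lower estimate for $m=j\ne\ell_i$.

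Next, for each fixed $j\ne\ell_i$, I will subtract the two estimates to obtain
\[
\|x_i'-c_j\|-\|x_i'-c_{\ell_i}\|\ \ge\ \bigl(\|x_i-c_j\|-\|x_i-c_{\ell_i}\|\bigr)-2\varepsilon\ \ge\ \gamma_i-2\varepsilon,
\]
where the last step uses that $\gamma_i$ is the minimum over $j\ne\ell_i$ of the bracketed quantity. The hypothesis $\varepsilon<\gamma_i/2$ makes the right-hand side strictly positive, so $c_{\ell_i}$ is strictly closer to $x_i'$ than every other center.

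The only point needing care is the tie-breaking rule. Because all the inequalities above are strict, $c_{\ell_i}$ is the unique nearest center of $x_i'$. Hence any deterministic tie-breaking rule assigns $x_i'$ to $c_{\ell_i}$, and the assigned center of index $i$ is unchanged. I will also note that the hypothesis forces $\gamma_i>0$, since $\varepsilon\ge 0$. So $x_i$ itself has a unique nearest center, and $\ell_i$ is well defined regardless of tie-breaking.

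I do not expect a genuine obstacle; the argument is identical to the proof of the boundary-margin criterion in Section 5. The detailed form only needs to spell out the reverse triangle inequality and the uniqueness and tie-breaking point explicitly.
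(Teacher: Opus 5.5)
Your proposal is correct and matches the paper's proof: you use the two-sided triangle-inequality bounds at each center and subtract them to get $\gamma_i-2\varepsilon>0$ for every $j\ne\ell_i$. From this you conclude that $c_{\ell_i}$ remains the strictly nearest center of $x_i'$. Your explicit remark that strictness makes the tie-breaking rule irrelevant is a small clarification that the paper leaves implicit.
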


\begin{proof}
Let $\ell_i$ denote the index of the center to which $x_i$ is assigned. For any center $c_j$, by the triangle inequality,
\[
\|x_i' - c_j\| \le \|x_i - c_j\| + \varepsilon, \qquad
\|x_i' - c_j\| \ge \|x_i - c_j\| - \varepsilon.
\]

For any $j \ne \ell_i$, we compare distances:
\[
\|x_i' - c_j\| - \|x_i' - c_{\ell_i}\|
\ge (\|x_i - c_j\| - \varepsilon) - (\|x_i - c_{\ell_i}\| + \varepsilon).
\]
Thus
\[
\|x_i' - c_j\| - \|x_i' - c_{\ell_i}\|
\ge (\|x_i - c_j\| - \|x_i - c_{\ell_i}\|) - 2\varepsilon
\ge \gamma_i - 2\varepsilon.
\]

If $\varepsilon < \gamma_i/2$, then $\gamma_i - 2\varepsilon > 0$. Hence for all $j \ne \ell_i$,
$\|x_i' - c_j\| > \|x_i' - c_{\ell_i}\|,$
so $x_i'$ remains assigned to $c_{\ell_i}$.
\end{proof}

\subsection{A bound on partition distance}

We give an upper bound on the partition distance in terms of the number of indices that change their assigned centers.

\begin{lemma}
Let $P = A(X)$ and $Q = A(X')$ be partitions produced by the same nearest-center assignment rule with fixed centers and a deterministic tie-breaking rule. If at most $m$ indices change their assigned centers between $X$ and $X'$, then
$d(P,Q) \le \min\!\left(1, \frac{2m}{n-1}\right).$

\end{lemma}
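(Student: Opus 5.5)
Let $S\subseteq[n]$ be the set of indices whose assigned center changes between $X$ and $X'$, so $|S|\le m$ by hypothesis. The argument is a counting argument over unordered pairs, and it uses only the fact that block membership is determined by assigned centers.

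\emph{Step 1: reduce to labels.} Let $\ell_i$ and $\ell_i'$ denote the assigned centers of $x_i$ and $x_i'$. Under nearest-center assignment with the fixed tie-breaking rule, $\mathbf{1}_P(i,j)=1$ iff $\ell_i=\ell_j$, and $\mathbf{1}_Q(i,j)=1$ iff $\ell_i'=\ell_j'$. This holds because the clusters are exactly the nonempty preimages of the center labels.

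\emph{Step 2: pairs outside $S$ agree.} Take a pair $\{i,j\}$ with $i,j\notin S$. Then $\ell_i=\ell_i'$ and $\ell_j=\ell_j'$, so $\ell_i=\ell_j$ iff $\ell_i'=\ell_j'$, and hence $\mathbf{1}_P(i,j)=\mathbf{1}_Q(i,j)$. Every disagreeing pair therefore contains at least one index of $S$.

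\emph{Step 3: count.} Each index in $S$ lies in exactly $n-1$ unordered pairs. A union bound then shows that the number of disagreeing pairs is at most $|S|(n-1)\le m(n-1)$. Pairs with both endpoints in $S$ are counted twice, which only loosens the bound. Dividing by $\binom{n}{2}=n(n-1)/2$ gives
\[
d(P,Q)\le \frac{2m(n-1)}{n(n-1)}=\frac{2m}{n}.
\]
This is at most $\frac{2m}{n-1}$, so the stated bound holds. Combining it with the trivial bound $d(P,Q)\le 1$, which holds because $d$ is a fraction of pairs, yields $d(P,Q)\le\min\!\left(1,\frac{2m}{n-1}\right)$.

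There is no serious obstacle. The only point needing care is Step 1, namely checking that "same block" is exactly "same assigned center". This holds because the rule $A$ is defined by grouping indices by center label, and the tie-breaking rule makes each label well defined, so label equality is well posed for both $X$ and $X'$.
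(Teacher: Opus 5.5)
Your proof is correct and uses the same pair-counting argument as the paper: every disagreeing pair must contain a switched index, so there are at most $m(n-1)$ such pairs, and your Step 2 makes the paper's informal ``can affect at most $n-1$ pairs'' claim precise. Your arithmetic $m(n-1)/\binom{n}{2}=2m/n$ is the correct one (the paper writes this quotient as equal to $2m/(n-1)$, which is a slip), and since $2m/n\le 2m/(n-1)$ the stated bound holds either way, with your version giving the slightly sharper $\min(1,2m/n)$.
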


\begin{proof}
Each index that changes its assigned center can affect at most $(n-1)$ unordered pairs involving that index. Thus the total number of pairs whose same-block relation may change is at most $m(n-1)$.

Dividing by $\binom{n}{2}$ gives
\[
d(P,Q) \le \frac{m(n-1)}{\binom{n}{2}} = \frac{2m}{n-1}.
\]
Since $d(P,Q) \le 1$ by definition, the stated bound follows.
\end{proof}

This bound controls how changes in assignments can affect the partition distance. In general, assignment changes do not always produce maximal changes in partition distance, since multiple assignment changes may preserve or partially preserve the grouping structure.

\subsection{Computational details}

All numerical illustrations use synthetic datasets in $\mathbb{R}^2$. Points are generated near fixed centers as described in the main text. Perturbations are applied either as bounded noise, where $\|\eta_i\| \le \varepsilon$, or as Gaussian noise, where $\eta_i \sim \mathcal{N}(0,\sigma^2 I_2)$ independently. Clustering is performed using nearest-center assignment with fixed centers and a deterministic tie-breaking rule. The partition distance is computed using the pairwise disagreement measure defined in Section 3. All quantities, such as $N_{\mathrm{sw}}$ and $d(P,P')$, are computed directly from the definitions. In simulations, these quantities are averaged over independent trials to illustrate typical behavior.

\section*{Acknowledgment}
The authors are grateful to Professor Michael R. Yaraturo (Department of Mathematics, Penn State University Brandywine) for a careful reading of the manuscript and for valuable suggestions that improved the exposition.

\end{document}